\documentclass[11pt]{article}
\usepackage{mathtools}
\usepackage{amssymb}
\usepackage{cleveref}
\usepackage{graphicx}
\usepackage{amsmath, amsthm}
\usepackage[margin=1.4in]{geometry}
\usepackage{tikz}
\setlength\parindent{10pt}
\usepackage{setspace}
\onehalfspacing

\theoremstyle{definition}
\newtheorem{definition}{Definition}
\newtheorem{proposition}{Proposition}
\newtheorem{corollary}{Corollary}

\numberwithin{intassumption}{assumption}

\newtheorem{lemma}{Lemma}
\newtheorem{theorem}{Theorem}

\newcommand{\circlednum}[1]{ \tikz[baseline=(X.base)] \node(X) [draw, shape=circle, inner sep=0.5 pt] {\tiny #1};}

\begin{document}
	\title{A Note on the Solution of Circulant Real Linear Systems and its Sensitivity Analysis}
	\date{October 21, 2025}
	\author{Alessandro Guazzini\thanks{Department of Economics, University of Florence; Email: alessandro.guazzini@edu.unifi.it. This research did not receive any specific grant from funding agencies in the public, commercial, or not-for-profit sectors.}, Enrico Caricchio\thanks{Email: enrico.caricchio@edu.unifi.it}}
	\maketitle
	
\begin{abstract}
	Employing the Fast Fourier Transform we propose a ready-to-use solution to circulant real linear systems of equations, particularly useful when a broader theoretical analysis is involved. We also show that strict diagonal dominance of the matrix of coefficients is a sufficient condition for sign consistency between solutions and parameters in sensitivity analysis. \\
	\textbf{Key words.} Circulant matrix, Real linear system of equations, Circulant structure, FFT, Sensitivity Analysis, Strict Diagonal Dominance \\
	\textbf{MSC codes.} 15A06, 15A09 \\
\end{abstract}

\section{Introduction}
Several fields within physics, engineering, statistics and economics involve circulant linear systems of equations, their solutions and their sensitivity analysis. 

A considerable body of earlier literature has addressed this topic. See for instance, (Berg 1975) [1], (Chen 1987) [3], (Chao 1988) [2] and (Rojo 1990) [11]. In recent years, new methods have been presented by (El-Sayed 2005) [6], (Corless and Fillion 2013) [5], (Lin 2013) [9] and (Lin 2014) [10].

The above works have not reached a ready-to-use solution, convenient when a deeper theoretical analysis is entailed.

Utilizing the Fast Fourier Transform we fill this gap by providing an easily intelligible solution to circulant real linear systems, both for the case of a non-constant and a constant vector of known values. 

We also give strict diagonal dominance of the matrix of coefficients as a sufficient condition for sign consistency between solutions and parameters in sensitivity analysis.

An ideal application of the result is in the well-known (Salop 1979) [12] or (Chen and Riordan 2007) [4] economics models where the equilibrium allocations are found solving circulant real systems of linear equations, to which a straightforward solution and sensitivity analysis approach is of great benefit for their economic interpretations. 

\section{The Solution to a Circulant Real Linear System of Equations}
In this section we derive our proposed solution to a circulant real linear system. 

We first introduce the following.

\begin{definition} \label{def.mat}
A circulant real linear system is
\begin{equation*}
	Ax=b
\end{equation*}
where $A \in \mathbb{R}^{n\times n}$ is the non-singular matrix of coefficients such that the $(k,j)$ entry of $A$ is $a_{k,j}=a_{\left(j-k\right)\mod n}$. While $x\in\mathbb{R}^n$ is the vector of solutions and $b\in\mathbb{R}^n$ is the vector of known values such that the $j$ entry of $b$ is $b_j=f_j\left(b_{1j}\dots, b_{sj}\right)$ with $f_j:\mathbb{R}^{s}\rightarrow\mathbb{R}$ at least once continuously differentiable.
\end{definition}

\begin{proposition}\label{prop.gray}
	\begin{equation*}
	 A = F \Psi F^*
	\end{equation*} 
where $F\in\mathbb{C}^{n\times n}$ is the Fast Fourier Transform (FFT) matrix of $A$, or the matrix of $A$'s eigenvectors where the $j^\text{th}$ element of the $k^\text{th}$ eigenvector is the $j^\text{th}$ of $n$ distinct complex roots of unity $\frac{\omega_{jk}}{\sqrt{n}} = \frac{1}{\sqrt{n}}e^{\frac{-2\pi i jk}{n}}$, $F^{*}\in\mathbb{C}^{n\times n}$ is instead $A$'s FFT conjugate matrix and eventually $\Psi\in\mathbb{C}^{n\times n}$ is the diagonal matrix of $A$'s eigenvalues with the $k^\text{th}$ element being $\psi_k=\sum\limits_{j=0}^{n-1}a_{j} e^{\frac{-2\pi i jk}{n}}=\sum\limits_{j=0}^{n-1}a_j\left[\cos\left(\frac{2\pi j k}{n}\right)-i\sin \left(\frac{2 \pi j k}{n}\right)\right]= \sum \limits_{j=0}^{n-1}a_j \cos\left(\frac{2\pi j k}{n}\right) - i \sum \limits_{j=0}^{n-1}a_j \sin\left(\frac{2\pi j k}{n}\right)$.
\end{proposition}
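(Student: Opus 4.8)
The plan is to obtain the spectral decomposition directly from the circulant structure, via the standard diagonalization of circulant matrices: the columns of $F$ are eigenvectors of $A$, and $F$ is unitary, so $F^{-1}=F^{*}$; combining the two facts gives $A=F\Psi F^{*}$ at once. Worth noting in advance is that the symmetry hypothesis $a_{n-l}=a_{l}$ is not actually needed for the decomposition itself — it only forces the diagonal entries $\psi_{k}$ to be real — so I would prove the general circulant statement and then record the reality of $\Psi$ as a closing remark, since that is what matters for the real systems treated later in the note.

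First I would check that $F$ is unitary. With the $(j,k)$ entry of $F$ equal to $\tfrac{1}{\sqrt n}e^{-2\pi i jk/n}$ and $F$ symmetric (so $F^{*}=\overline{F}$), the $(j,l)$ entry of $F^{*}F$ is $\tfrac{1}{n}\sum_{k=0}^{n-1}e^{2\pi i k(j-l)/n}$, which by the finite geometric sum of $n$-th roots of unity equals $1$ for $j=l$ and $0$ otherwise; hence $F^{*}F=I$ and $F^{*}=F^{-1}$.

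Next I would verify that the $r$-th column $v^{(r)}$ of $F$, with $k$-th entry $(v^{(r)})_{k}=\tfrac{1}{\sqrt n}e^{-2\pi i kr/n}$, is an eigenvector of $A$ with eigenvalue $\psi_{r}$. Using $a_{k,j}=a_{(j-k)\bmod n}$ and the substitution $m=(j-k)\bmod n$,
\[
(Av^{(r)})_{k}=\sum_{j=0}^{n-1}a_{(j-k)\bmod n}\,\tfrac{1}{\sqrt n}e^{-2\pi i jr/n}=\tfrac{1}{\sqrt n}e^{-2\pi i kr/n}\sum_{m=0}^{n-1}a_{m}e^{-2\pi i mr/n}=\psi_{r}\,(v^{(r)})_{k},
\]
where the middle step uses that $e^{-2\pi i jr/n}$ depends on $j$ only modulo $n$, so one may legitimately replace $j$ by $m+k$ and factor the exponential. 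As this holds for all $k$ and all $r=0,\dots,n-1$, we get $AF=F\Psi$, and multiplying on the right by $F^{*}=F^{-1}$ gives $A=F\Psi F^{*}$, as claimed.

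I do not expect a genuine obstacle, as the result is classical; the only care needed is bookkeeping — keeping the modular indexing straight in the substitution $m=(j-k)\bmod n$, invoking periodicity of the roots of unity to justify it, and matching the sign and normalization conventions so that $F$ is exactly the stated unitary matrix and the eigenvalues come out exactly as $\psi_{k}=\sum_{m=0}^{n-1}a_{m}e^{-2\pi i mk/n}$. I would close the argument by pairing the $m$ and $n-m$ terms in $\psi_{k}$ and using $a_{n-m}=a_{m}$ to turn each conjugate pair into $2a_{m}\cos(2\pi mk/n)$, which shows $\psi_{k}\in\mathbb{R}$ and hence that $\Psi$ is a real diagonal matrix.
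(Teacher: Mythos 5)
Your proposal is correct and is the standard diagonalization argument for circulant matrices; the paper itself offers no proof here but simply cites Gray (2006), where essentially the same computation (columns of $F$ are eigenvectors, $F$ unitary, hence $A=F\Psi F^{*}$) is carried out. Your closing observation that the symmetry $a_{n-l}=a_{l}$ is irrelevant to the decomposition and only forces $\psi_{k}\in\mathbb{R}$ is accurate, and is exactly the content the paper defers to its Proposition 3.
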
 
\begin{proof}
	See (Gray 2006, 32-34) [7].
\end{proof}

\begin{corollary} \label{cor.inv.mat}
	\begin{equation*}
		A^{-1}=F\Psi^{-1}F^*
	\end{equation*}
\end{corollary}
\begin{proof}
	See (Gray 2006, 35) [7].
\end{proof}

We now introduce our first result.

\begin{theorem}\label{th.1}
For any $l=0,\dots,n-1$ the $l^\text{th}$ element of the solution vector $x$ is 
	{\small\begin{align*}
x_{l} = & \frac{\sum\limits_{j=0}^{n-1}b_j}{n\sum\limits_{j=0}^{n-1}a_j} + 2\sum\limits_{k=1}^{\lfloor\frac{n-1}{2}\rfloor}\frac{\sum \limits_{j=0}^{n-1}\sum\limits_{m=0}^{n-1}a_j b_m \cos\left(\frac{2\pi k \left(j+m-l\right)}{n}\right)}{n\sum\limits_{j=0}^{n-1}\sum\limits_{m=0}^{n-1}a_j a_m \cos\left(\frac{2 \pi k \left(j-m\right)}{n}\right)}+
\begin{cases}
		\frac{\sum\limits_{j=0}^{n-1} \left(-1\right)^{j+l} b_j}{n \sum \limits_{j=0}^{n-1}\left(-1\right)^j a_j}& \text{ if n even}\\
		0 & \text{ if n odd}
\end{cases}
    \end{align*}}
\end{theorem}
\begin{proof}
	We first note that $x=A^{-1}b=F\Psi^{-1}F^*b$ by \cref{def.mat} and \cref{cor.inv.mat}.
	
	Then, using \cref{prop.gray} and \cref{cor.inv.mat} we can readily observe that for any $l=0,\dots,n-1$
	\begin{equation*}
		x_l=\frac{1}{n}\sum\limits_{j=0}^{n-1}b_j\sum\limits_{k=0}^{n-1}\omega_{lk}\omega_{kj}^{*}\psi_{k}^{-1}=\sum\limits_{k=0}^{n-1}\psi_{k}^{-1}\frac{1}{n}\sum\limits_{j=0}^{n-1}b_{j}e^{\frac{2\pi i kj}{n}}e^{\frac{-2\pi i lk}{n}}
	\end{equation*}
	or 
	\begin{equation*}
		x_l=\sum\limits_{k=0}^{n-1}\psi_{k}^{-1}T_{k}e^{\frac{-2\pi i lk}{n}}
	\end{equation*}
	with $T_k=\frac{1}{n}\sum\limits_{j=0}^{n-1}b_{j}e^{\frac{2\pi i kj}{n}}$.

Furthermore, noting that 
\begin{equation*}
	T_{n-k} = \frac{1}{n}\sum\limits_{j=0}^{n-1}b_{j}e^{\frac{2\pi i (n-k)j}{n}} =\frac{1}{n}\sum\limits_{j=0}^{n-1}b_{j}e^{\frac{2\pi i nj}{n}}e^{\frac{-2\pi i kj}{n}} = \frac{1}{n} \sum_{j=0}^{n-1} b_j (-1)^{2j} e^{\frac{-2\pi i kj}{n}} = \overline{T_{k}}
\end{equation*} 
and similarly that
\begin{equation*}
	\psi_{n-k} =  \sum\limits_{j=0}^{n-1} a_j e^{\frac{-2\pi i j (n-k)}{n}}=  \sum\limits_{j=0}^{n-1} a_j e^{-2\pi i j} e^{\frac{2 \pi i j k}{n}} =  \sum\limits_{j=0}^{n-1} a_j \left(e^{\pi i}\right)^{-2j} e^{\frac{2 \pi i j k}{n}} =\overline{\psi_{k}}
\end{equation*}
it follows that \begin{align*}
&\psi_k^{-1}T_ke^{\frac{-2\pi i l k}{n}}+\psi_{n-k}^{-1}T_{n-k}e^{\frac{-2 \pi i l \left(n-k\right)}{n}}=\psi_k^{-1}T_{k}e^{\frac{-2\pi i lk}{n}} + \overline{\psi_{k}^{-1}}\overline{T_{k}}e^{\frac{-2\pi i l n}{n}}e^{\frac{2\pi i lk}{n}} = \\& \psi_{k}^{-1}T_{k}e^{\frac{-2\pi i lk}{n}} +\overline{\psi_{k}^{-1}A_{k}e^{\frac{-2\pi i lk}{n}}} = 2\mathfrak{R}\left(\psi_{k}^{-1}T_{k}e^{\frac{-2\pi i lk}{n}}\right)
\end{align*}

Eventually, since there are $\lfloor\frac{n-1}{2} \rfloor$ unique couples $\left(k,n-k\right)$ with a distinguished $k=\frac{n}{2}$ if $n$ is even, $x_l$ might be rewritten as 
\begin{equation*}
		x_{l} = \psi_0^{-1}T_0 + \sum\limits_{k=0}^{\lfloor\frac{n-1}{2}\rfloor}2\mathfrak{R}\left(\psi_{k}^{-1}T_{k}e^{\frac{-2\pi i l k}{n}}\right)+\begin{cases}
			\left(-1\right)^{l}\psi_{\frac{n}{2}}^{-1}T_{\frac{n}{2}} & \text{ if n even}\\
			0 & \text{ if n odd}
		\end{cases}
\end{equation*}

The proof is completed noting that \begin{align*}
	&\psi_0^{-1}=\left(\sum\limits_{j=0}^{n-1}a_je^{\frac{-2\pi ij 0}{n}}\right)^{-1}=\frac{1}{\sum\limits_{j=0}^{n-1}a_j}, \\
	&T_0=\frac{1}{n}\sum\limits_{j=0}^{n-1}b_j, \\
	&\psi_{\frac{n}{2}}^{-1}=\left(\sum\limits_{j=0}^{n-1}a_je^{\frac{-2 \pi i j \frac{n}{2}}{n}}\right)^{-1}=\left(\sum\limits_{j=0}^{n-1}a_j\left(e^{\pi i}\right)^{-j}\right)^{-1}=\frac{1}{\sum\limits_{j=0}^{n-1} \left(-1\right)^{j}a_j}, \\
	&T_{\frac{n}{2}}=\frac{1}{n}\sum\limits_{j=0}^{n-1}b_j e^{\frac{2\pi i \frac{n}{2} j}{n}}=\frac{1}{n}\sum\limits_{j=0}^{n-1}\left(-1\right)^{j} b_j
	\end{align*} 
	and that
	\begin{equation}
	\begin{aligned}
	&\mathfrak{R}\left(\psi_k^{-1}T_k e^{\frac{-2 \pi i l k}{n}}\right)=\mathfrak{R}\left(\psi_k^{-1}\right)\mathfrak{R}\left(T_k e^{\frac{-2 \pi i l k}{n}}\right)-\mathfrak{I}\left(\psi_k^{-1}\right)\mathfrak{I}\left(T_k e^{\frac{-2 \pi i l k}{n}}\right)\overset{\circlednum{1}}{=} \\
	& \frac{\sum \limits_{j=0}^{n-1} a_j \cos\left(\frac{2\pi j k}{n}\right)\sum \limits_{j=0}^{n-1}b_j \cos \left(\frac{2 \pi k \left(j-l\right)}{n}\right)-\sum \limits_{j=0}^{n-1} a_j \sin\left(\frac{2\pi j k}{n}\right)\sum \limits_{j=0}^{n-1}b_j \sin \left(\frac{2 \pi k \left(j-l\right)}{n}\right)}{n\left[\left[\sum \limits_{j=0}^{n-1}a_j \cos\left(\frac{2\pi j k}{n}\right)\right]^2+\left[\sum \limits_{j=0}^{n-1}a_j \sin\left(\frac{2\pi j k}{n}\right)\right]^2\right]}=\\
	& \frac{\sum \limits_{j=0}^{n-1}\sum\limits_{m=0}^{n-1}a_j b_m \cos\left(\frac{2\pi k \left(j+m-l\right)}{n}\right)}{n\sum\limits_{j=0}^{n-1}\sum\limits_{m=0}^{n-1}a_j a_m \cos\left(\frac{2 \pi k \left(j-m\right)}{n}\right)}
\end{aligned}
\end{equation}
with $\circlednum{1}$ following by $\psi_k\psi_k^{-1}=$\\ $\left[\sum \limits_{j=0}^{n-1}a_j \cos\left(\frac{2\pi j k}{n}\right) - i \sum \limits_{j=0}^{n-1}a_j \sin\left(\frac{2\pi j k}{n}\right)\right] \frac{\sum \limits_{j=0}^{n-1}a_j \cos\left(\frac{2\pi j k}{n}\right) + i \sum \limits_{j=0}^{n-1}a_j \sin\left(\frac{2\pi j k}{n}\right)}{\left[\sum \limits_{j=0}^{n-1}a_j \cos\left(\frac{2\pi j k}{n}\right)\right]^2+\left[\sum \limits_{j=0}^{n-1}a_j \sin\left(\frac{2\pi j k}{n}\right)\right]^2}=1$.
\end{proof}

A special case of \cref{th.1} is also presented.

\begin{proposition} \label{prop.cons}
	If the vector of known values $b$ is such that for any $j$ $b_j=f\left(b_1,\dots,b_s\right)=\beta$, the $l^\text{th}$ element of the solution vector $x$ is
	\begin{equation*}
		x_l=\beta\left(\sum_{j=0}^{n-1}a_j\right)^{-1}
	\end{equation*}
\end{proposition}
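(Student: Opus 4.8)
The plan is to substitute the constant data $b_j=\beta$ directly into the closed form for $x_l$ given by \cref{theor.main} and show that every term except the first collapses. First I would evaluate \cref{prop.real.eig} at $k=0$: since $\cos 0 = 1$ and $(-1)^0=1$, this yields $\psi_0 = a_0 + 2\sum_{m=1}^{\lfloor (n-1)/2\rfloor} a_m$, plus $a_{n/2}$ when $n$ is even, which is exactly $\sum_{m=0}^{n-1}a_m$. Because $A$ is non-singular, $\psi_0$ is a nonzero eigenvalue, so $\psi_0^{-1} = \bigl(\sum_{j=0}^{n-1}a_j\bigr)^{-1}$ is well defined; this already identifies the target constant with the leading coefficient in the theorem.

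Next I would dispatch the three pieces of the formula. With $b_j=\beta$, the first term becomes $\tfrac1n\psi_0^{-1}\cdot n\beta = \beta\psi_0^{-1}$. For the middle term, for each fixed $k\in\{1,\dots,\lfloor (n-1)/2\rfloor\}$ one needs $\sum_{j=0}^{n-1}\cos\!\bigl(\tfrac{2\pi k(j-l)}{n}\bigr)=0$; I would obtain this by writing the cosine as $\mathfrak{R}\bigl(e^{2\pi i k(j-l)/n}\bigr)$ and summing the geometric series in $j$ with ratio $e^{2\pi i k/n}\neq 1$ (as $0<k<n$), which equals $0$. Hence the whole middle sum vanishes. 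Likewise, when $n$ is even, $\sum_{j=0}^{n-1}(-1)^j\beta = \beta\sum_{j=0}^{n-1}(-1)^j = 0$, killing the even-case correction. Combining, $x_l = \beta\psi_0^{-1} = \beta\bigl(\sum_{j=0}^{n-1}a_j\bigr)^{-1}$, independent of $l$.

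I would also mention the conceptually cleaner alternative route: the all-ones vector $\mathbf 1$ is an eigenvector of the circulant matrix $A$, since every row of $A$ is a permutation of $(a_0,\dots,a_{n-1})$, so $A\mathbf 1 = \bigl(\sum_{j=0}^{n-1}a_j\bigr)\mathbf 1$. As $A$ is non-singular this eigenvalue is nonzero, whence $A^{-1}\mathbf 1 = \bigl(\sum_{j=0}^{n-1}a_j\bigr)^{-1}\mathbf 1$ and $x = A^{-1}(\beta\mathbf 1) = \beta\bigl(\sum_{j=0}^{n-1}a_j\bigr)^{-1}\mathbf 1$, giving the claim in one line.

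The only step needing any care is the vanishing of the cosine sums, which is just the orthogonality of characters on $\mathbb{Z}/n\mathbb{Z}$; everything else is bookkeeping. I therefore expect no genuine obstacle — the proposition is essentially a consistency check that the general formula degenerates correctly on constant right-hand sides.
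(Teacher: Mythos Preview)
Your argument is correct. However, the paper does not route through \cref{theor.main}: it starts again from $x=F\Psi^{-1}F^{*}b$ and, after substituting $b_j=\beta$, applies the complex-exponential orthogonality $\sum_{j=0}^{n-1}e^{2\pi i kj/n}=n\,\delta_{k\bmod n,0}$ directly to collapse everything to $\beta\psi_0^{-1}$, then reads off $\psi_0=\sum_j a_j$. So the paper's proof is logically independent of the main theorem, whereas your primary route treats \cref{prop.cons} as a degeneration check of \cref{theor.main}; the underlying mechanism (character orthogonality on $\mathbb{Z}/n\mathbb{Z}$) is the same, just applied one level earlier in the paper and one level later by you. Your alternative observation that $\mathbf 1$ is an eigenvector of any circulant with eigenvalue $\sum_j a_j$ is more elementary than either version and bypasses the FFT entirely; that is the shortest proof of the three.
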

\begin{proof}
	Noting again that $x=A^{-1}b=F\Psi^{-1}F^*b$ by \cref{def.mat} and \cref{cor.inv.mat}, it is easily verified that 
	\begin{equation*}
		x_l=\frac{1}{n}\sum\limits_{j=0}^{n-1}\beta\sum\limits_{k=0}^{n-1}\omega_{lk}\omega_{kj}^{*}\psi_{k}^{-1}=\frac{\beta}{n}\sum_{k=0}^{n-1}e^{\frac{-2 \pi i l k}{n}}\psi_k^{-1}\sum_{j=0}^{n-1}e^{\frac{2 \pi i k j}{n}}
	\end{equation*}
	by proposition 1.
	
	Then, by the orthogonality of the complex exponential
	\begin{equation*}
		\sum_{j=0}^{n-1}e^{\frac{2 \pi i k j}{n}}=\begin{cases}
			n & \text{if $k \mod n = 0$} \\ 0 & \text{otherwise}
		\end{cases}
	\end{equation*}
	since for any $k$, $k<n$ it follows that
	\begin{equation*}
		x_l=\beta\psi_0^{-1}
	\end{equation*}
	
	Eventually, for $\psi_0=\sum\limits_{j=0}^{n-1}a_je^{\frac{-2\pi i j 0}{n}}=\sum\limits_{j=0}^{n-1}a_j$ the proof is completed.
\end{proof}

\section{A Sufficient Condition for Sign Consistency in Sensitivity Analysis}
We finally introduce strict diagonal dominance of the matrix of coefficients as a sufficient condition for sign consistency between solutions and parameters in sensitivity analysis.

\begin{lemma}\label{lem}
	If $A$ is such that $a_0>0$ and $a_0>\sum\limits_{j=1}^{n-1} |a_j|$, i.e. $A$ is \emph{strictly diagonally dominant}, then for any $k$, $\mathfrak{R}\left(\psi_k\right)>0$.
\end{lemma}
\begin{proof}
	See (Horn and Johnson 1990, 349).
\end{proof}

Now, recalling that by definition 1 $b_l=f_l\left(b_{1l},\dots,b_{rl}, \dots, b_{sl}\right)$, our second result follows.

\begin{theorem}\label{th.2}
	For any $l=0,\dots,n-1$ and for any $r=1,\dots, s$, if $A$ is strictly diagonally dominant, then 
	\begin{equation*}
		\frac{\partial x_l}{\partial b_{rl}}\ge0 \ \text{if and only if} \ \frac{\partial f_l}{\partial b_{rl}} \ge 0
	\end{equation*}
\end{theorem}
\begin{proof}
	For any $l=0,\dots,n-1$ and for any $r=1,\dots,s$, the partial derivative of $x_l$ with respect to $b_{rl}$ might be written as
	{ \begin{align*}
		\frac{\partial x_l}{\partial b_{rl}}=&\frac{\frac{\partial f_l}{\partial b_{rl}}}{n\sum\limits_{j=0}^{n-1}a_j}+\frac{2}{n}\frac{\partial f_l}{\partial b_{rl}}\sum\limits_{k=0}^{\lfloor\frac{n-1}{2}\rfloor}\frac{\sum \limits_{j=0}^{n-1}a_j \cos\left(\frac{2\pi j k}{n}\right)}{\left[\sum \limits_{j=0}^{n-1}a_j \cos\left(\frac{2\pi j k}{n}\right)\right]^2+\left[\sum \limits_{j=0}^{n-1}a_j \sin\left(\frac{2\pi j k}{n}\right)\right]^2}+ \\
		&\begin{cases}
			\frac{\sum\limits_{j=0}^{n-1} \left(-1\right)^{2l}\frac{\partial f_l}{\partial b_{rl}}}{n \sum \limits_{j=0}^{n-1}\left(-1\right)^j a_j}& \text{ if n even}\\
			0 & \text{ if n odd}
		\end{cases}
	\end{align*}}

   We note that if $A$ is strictly diagonally dominant, then $a_0+\sum \limits_{j=1}^{n-1}a_j\ge a_0-\sum\limits_{j=1}^{n-1} |a_j|\overset{\circlednum{2}}{>}0$ with $\circlednum{2}$ following by \cref{lem}.
   
   Similarly, $a_0+\sum \limits_{j=1}^{n-1}\left(-1\right)^{j} a_j\overset{\circlednum{3}}{\ge} a_0-\sum\limits_{j=1}^{n-1} |a_j|>0$ with $\circlednum{3}$ being true for $a_0+\sum\limits_{j \ \mathrm{even}} a_j + \sum\limits_{k \ \mathrm{odd}} -a_k \ge a_0-\sum\limits_{j \ \mathrm{even}} |a_j| - \sum\limits_{k \ \mathrm{odd}} |a_k|$.
   
   The proof is eventually completed by \cref{lem} noting that \begin{equation*} \frac{\sum \limits_{j=0}^{n-1}a_j \cos\left(\frac{2\pi j k}{n}\right)}{\left[\sum \limits_{j=0}^{n-1}a_j \cos\left(\frac{2\pi j k}{n}\right)\right]^2+\left[\sum \limits_{j=0}^{n-1}a_j \sin\left(\frac{2\pi j k}{n}\right)\right]^2}=\mathfrak{R}\left(\psi_k^{-1}\right)\end{equation*}
\end{proof}

	\section*{Bibliography}
	\begin{enumerate}
		\item Berg, L. (1975), Solution of Large Linear Systems with Help of Circulant Matrices. Z. angew. Math. Mech., 55: 439-441. \\ https://doi.org/10.1002/zamm.19750550714
		\item Chao, C.Y. (1988), A remark on symmetric circulant matrices. Linear Algebra and its Applications, 103: 133-148. \\
		https://doi.org/10.1016/0024-3795(88)90225-X.
		\item Chen, M. (1987). On the Solution of Circulant Linear Systems. SIAM Journal on Numerical Analysis, 24(3), 668–683. \\ http://www.jstor.org/stable/2157355
		\item Chen, Y. and Riordan, M. H. (2007), Price and Variety in the Spokes Model*. The Economic Journal, 117: 897-921. \\ https://doi.org/10.1111/j.1468-0297.2007.02063.x
		\item Corless, R. M. and Fillion, Nicolas (2013), A Graduate Introduction to Numerical Methods. From the Viewpoint of Backward Error Analysis. Springer New York, NY \\ https://doi.org/10.1007/978-1-4614-8453-0
		\item El-Sayed, S. M. (2005), A direct method for solving circulant tridiagonal block systems of linear equations. Applied Mathematics and Computation, 165(1): 23-30. \\ https://doi.org/10.1016/j.amc.2004.06.041.
		\item Gray, R. M. (2006), Toeplitz and Circulant Matrices: A Review. Foundations and Trends in Communications and Information Theory: Vol. 2(3): 155-239. \\ http://dx.doi.org/10.1561/0100000006
		\item Horn, R. A., and Johnson C. R. (1990), Matrix Analysis. 1 paperback ed. New York: Cambridge University Press. 
		\item Lin, F. (2013), The solution of linear systems equations with circulant-like coefficient matrices. Applied Mathematics and Computation, 219(15): 8259-8268. \\
		https://doi.org/10.1016/j.amc.2013.02.021.
		\item Lin, F. (2014), A direct method for solving block circulant banded system of linear equations. Applied Mathematics and Computation, 232: 1269-1276. \\
		https://doi.org/10.1016/j.amc.2014.01.099.
		\item Rojo, O. (1990), A new method for solving symmetric circulant tridiagonal systems of linear equations. Computers \& Mathematics with Applications, 20(12): 61-67. \\ https://doi.org/10.1016/0898-1221(90)90165-G.
		\item Salop, S. C. (1979). Monopolistic Competition with Outside Goods. The Bell Journal of Economics, 10(1): 141–156. \\ https://doi.org/10.2307/3003323
	\end{enumerate}
\end{document}